\newcommand{\cO}{\mathcal{O}}
\newcommand{\cX}{\mathcal{X}}
\newcommand{\cY}{\mathcal{Y}}
\newcommand{\cI}{\mathcal{I}}
\newcommand{\cL}{\mathcal{L}}
\newcommand{\cH}{\mathcal{H}}
\renewcommand{\AA}{\mathbb{A}}
\newcommand{\ZZ}{\mathbb{Z}}
\newcommand{\QQ}{\mathbb{Q}}
\newcommand{\FF}{\mathbb{F}}
\newcommand{\newword}[1]{\textbf{\emph{#1}}}
\newcommand{\Spec}{\mathrm{Spec \ }}
\newcommand{\Tr}{\mathrm{Tr}}
\newcommand{\rr}{\mathrm{red}}
\newcommand{\Frac}{\mathrm{Frac}\ }
\newcommand{\Gal}{\mathrm{Gal}}
\newcommand{\Tor}{\mathrm{Tor}}
\newcommand{\fp}{\mathfrak{p}}
\newcommand{\fq}{\mathfrak{q}}
\newcommand{\fr}{\mathfrak{r}}
\newtheorem{Theorem}{Theorem}
\newtheorem{Proposition}[Theorem]{Proposition}
\newtheorem{Lemma}[Theorem]{Lemma}
\theoremstyle{definition}
\newtheorem{Example}[Theorem]{Example}
\newtheorem{Remark}[Theorem]{Remark}
\begin{document}

\title[Pulling back Frobenius split subvarieties]{Frobenius split subvarieties pull back in almost all characteristics}
\author{David E Speyer}
\maketitle

\begin{abstract}
Let $\cX$ and $\cY$ be schemes of finite type over $\Spec \ZZ$ and let $\alpha: \cY \to \cX$ be a finite map. We show the following holds for all sufficiently large primes $p$: If $\phi$ and $\psi$ are any splittings on $\cX \times \Spec \FF_p$ and $\cY \times \Spec \FF_p$, such that the restriction of $\alpha$ is compatible with $\phi$ and $\psi$, and $V$ is any compatibly split subvariety of $(\cX \times \Spec \FF_p, \phi)$, then the reduction $\alpha^{-1}(V)^{\rr}$ is a  compatibly split subvariety of $(\cY \times \Spec \FF_p, \psi)$. This is meant as a tool to aid in listing the compatibly split subvarieties of various classically split varieties.
\end{abstract}

Let $X$ be a scheme over $\FF_p$. 
A \newword{Frobenius splitting} on $X$ is a map $\phi: \cO_X \to \cO_X$ obeying  $\phi(a+b) = \phi(a)+\phi(b)$, $\phi(a^p b) = a \phi(b)$ and $\phi(1)=1$. Here $\cO_X$ is the structure sheaf of $X$ and $a$ and $b$ are sections of $\cO_X$ on any open set of $X$.
A map $\phi$ which obeys only the first two conditions is called a \newword{near splitting}.

\begin{Remark} If $X=\Spec A$ is affine and $\phi: A \to A$ is a map obeying the above conditions, then $\phi$ automatically extends to such a map $\cO_X \to \cO_X$, and uniquely so. The reader will lose very little by thinking of a splitting as a map of rings rather than sheaves. \end{Remark}

We say that a subvariety $Z$ of $X$ is \newword{compatibly split} if the splitting $\phi$ descends to a splitting on $Z$. This is equivalent to requiring that $\phi(\cI_Z) \subseteq \cI_Z$, where $\cI_Z$ is the ideal sheaf of $Z$. It is easy to show (\cite[Proposition 1.2.1]{BK}) that, if $X$ supports a Frobenius splitting, then $\cO_X$ is reduced, which is why we discuss compatibly split subvarieties and don't need to consider more general subschemes.

Let $(X, \phi)$ and $(Y, \psi)$ be two $\FF_p$-schemes with Frobenius splittings, and $\alpha: Y \to X$ a finite map of $\FF_p$ schemes. Suppose that $\alpha$ is \newword{compatible} with the splittings $\phi$ and $\psi$, which means that $\psi(\alpha^{\ast} (a)) = \alpha^{\ast}(\phi(a))$. 
If $W$ is a split subvariety of $Y$, then $\alpha(W)$ is closed in $X$ and it is easy to see that $\alpha(W)$ is compatibly split in $X$. 

We now consider the reverse situation. Suppose that $V$ is a compatibly split subvariety of $X$. The scheme theoretic preimage $\alpha^{-1}(V)$ is often not reduced, and hence cannot be compatibly split. 
However, we can ask whether the reduction $\alpha^{-1}(V)^{\rr}$ is compatibly split.
The answer may still be no.

\begin{Example} \label{wild}
Let $X = \Spec \FF_2[w]$ and let $\phi$ be the unique Frobenius splitting with $\phi(w)=w$. The origin $w=0$ is compatibly split; we denote this point by $q$.
Let $Y = \Spec \FF_2[x,x^{-1}]$ and map $\alpha: Y \to X$ by $\alpha^{\ast}(w) = x+x^{-1}$. The splitting $\phi$ extends uniquely to a splitting on $Y$: We compute $w = \phi(w) = \phi(x)+\phi(x^{-1})=\phi(x)+\phi(x \cdot x^{-2}) = \phi(x)(1+x^{-1})$ so $\phi(x) = w/(1+x^{-1}) = x+1$. The preimage $\alpha^{-1}(q)$ is $x=1$, so we see that $x+1$ vanishes at $\alpha^{-1}(q)$ but $\phi(x+1) = x$ does not. Thus, taking $V = \{ q \}$, this is an example where  $\alpha^{-1}(V)^{\rr}$  is not compatibly split.
\end{Example}
%

This example is wildly ramified, and would not occur for a degree two cover in odd characteristic. 
This leads one to suspect that, if  we have some family of maps with varying characteristic, then, for almost all primes, such a relationship cannot occur. 
Our main theorem makes this statement precise.

\begin{Theorem} \label{MT}
Let $\cX$ and $\cY$ be schemes of finite type over $\Spec \ZZ$ and let $\alpha: \cY \to \cX$ be a finite map. Then there is a positive integer $N$ such that the following holds for any prime $p > N$: If $\phi$ and $\psi$ are any splittings on $\cX \times \Spec \FF_p$ and $\cY \times \Spec \FF_p$, such that the restriction of $\alpha$ is compatible with $\phi$ and $\psi$, and $V$ is any compatibly split subvariety of $(\cX \times \Spec \FF_p, \phi)$, then the reduction $\alpha^{-1}(V)^{\rr}$ is a  compatibly split subvariety of $(\cY \times \Spec \FF_p, \psi)$.
\end{Theorem}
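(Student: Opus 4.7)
The plan is to reduce to an affine, generically étale situation by spreading out over $\Spec \ZZ$, treat the étale locus directly, and induct on dimension for the remaining bad locus.

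Since the compatibly-split condition on a subvariety is a local property of ideal sheaves, I would first cover $\cX$ and $\cY$ by finitely many affine opens and work locally. Thus I may assume $\cX = \Spec A$ and $\cY = \Spec B$ with $B$ a finite $A$-algebra and $A, B$ both finitely generated over $\ZZ$. Next, observe that $A \otimes \QQ \to B \otimes \QQ$ is generically étale, since every finite extension of a characteristic-zero field is automatically separable. By standard spreading-out arguments, there exist a non-zerodivisor $s \in A$ and an integer $N_0$ such that for every prime $p > N_0$, the localized map $A[1/s] \otimes \FF_p \to B[1/s] \otimes \FF_p$ is finite étale.

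On the étale locus $D(s)$, there is a canonical $B$-module isomorphism $F_* B \cong B \otimes_A F_* A$ given by $b \otimes a \mapsto b^p a$. Under this identification, the compatible splitting $\psi$ extending $\phi$ corresponds to $1_B \otimes \phi$, so $\psi$ is uniquely determined by $\phi$. Concretely, for any $y \in B$ one has a decomposition $y = \sum b_i^p m_i$ with $b_i \in B$ and $m_i \in A$; then for $f$ in a $\phi$-preserved ideal $I \subseteq A$, one computes $\psi(y f) = \sum b_i \phi(m_i f) \in IB$, since $\phi$ preserves $I$. By linearity, $\psi(IB) \subseteq IB$, and étaleness also ensures $IB$ equals its own radical, so the conclusion of the theorem holds after localizing at $s$.

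It remains to handle the primes of $B$ lying over the bad locus $V(s) \subseteq X$. I would proceed by induction on $\dim A$, with the zero-dimensional case being trivial. The idea is to enlarge $V(s)$ to a compatibly split subvariety $\widetilde Z \subseteq X$ of strictly smaller dimension, and apply the inductive hypothesis to the restricted map $\alpha: \alpha^{-1}(\widetilde Z)^{\rr} \to \widetilde Z$; combining this with the étale case gives the containment $\psi(\sqrt{IB}) \subseteq P$ at every minimal prime $P$ of $IB$, hence $\psi(\sqrt{IB}) \subseteq \sqrt{IB}$. The main obstacle is producing such a $\widetilde Z$: it must contain $V(s)$, be compatibly split in $X$, have smaller dimension, and crucially have $\alpha^{-1}(\widetilde Z)^{\rr}$ also compatibly split in $Y$ so that the restricted splittings exist. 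This last condition is essentially another instance of the theorem, so I expect the proof must either be structured as a simultaneous induction (on $\dim A$ together with the poset of compatibly split subvarieties), or else identify $\widetilde Z$ with the vanishing locus of a canonically $\phi$-stable ideal---for instance, one arising from a Fitting ideal of $\Omega_{B/A}$---so that its compatible splitness and that of its preimage come automatically.
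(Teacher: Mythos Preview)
Your \'etale-locus argument is sound, but the inductive step has a genuine gap, which you yourself flag: you need a closed subscheme $\widetilde Z \subseteq \cX$, defined over $\ZZ$, containing the ramification locus $V(s)$ and such that for \emph{every} large $p$ and \emph{every} compatible pair of splittings, both $\widetilde Z(p)$ and $\alpha^{-1}(\widetilde Z)^{\rr}(p)$ are automatically compatibly split. Neither of your suggestions produces such a $\widetilde Z$. A Fitting ideal of $\Omega_{B/A}$, or the discriminant or different, has no reason to be stable under an \emph{arbitrary} splitting $\phi$; these ideals are built from determinants of Jacobian-type matrices, and there is no algebraic identity forcing $\phi$ to preserve them. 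The ``simultaneous induction'' is circular as stated: knowing that $\alpha^{-1}(\widetilde Z)^{\rr}$ is split is an instance of the theorem at the same dimension, and you cannot shrink the dimension until $\widetilde Z$ has already been fixed independently of $\phi$ and $p$.

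The paper avoids this by reorganizing the argument. First, it proves the theorem directly whenever $\cX$ and $\cY$ are both \emph{normal} and $p > \deg\alpha$, via trace: one checks that $\Tr_{L/K}$ commutes with any compatible splitting, and then if some $x \in \sqrt{\fp B}$ had $\psi(x) \notin \sqrt{\fp B}$, a computation of $\Tr_{L/K}(\psi(x)) \bmod \fp$ in a Galois closure yields a nonzero integer of size at most $\deg\alpha$, a contradiction. This is strictly stronger than your \'etale argument, since it covers the ramified points as well. Second, the reduction to the normal case uses the conductor $D = \{u \in A : u\widetilde A \subseteq A\}$ of the normalization $A \hookrightarrow \widetilde A$. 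The one-line identity $\phi(u)\,\widetilde a = \phi(u\,\widetilde a^{\,p}) \in A$ shows $D$ is $\phi$-stable for \emph{any} splitting, so the conductor locus (and its preimage in $\widetilde\cX$) is always compatibly split. This is exactly the canonically split, splitting-independent, lower-dimensional locus you were seeking --- but it is attached to the normalization map $\widetilde\cX \to \cX$, not to $\alpha$, and the induction is applied to that map.
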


\begin{Remark}
There is no difficulty, other than notation, in extending this result to rings of integers in number fields other than $\QQ$.
\end{Remark}

For the rest of the paper, we will adopt the shorthands $\cX(p)$ and $\cY(p)$ for $\cX \times \Spec \FF_p$ and $\cY \times \Spec \FF_p$.

We will prove Theorem~\ref{MT} in Section~\ref{proofs}; the proof is quite elementary.
Before that, we will explain in Section~\ref{Motivation} why this result is interesting.

I would like to express my thanks to Allen Knutson for introducing me to Frobenius splitting, and to Karl Schwede and Jenna Rajchgot for helpful conversations and for encouraging me to publish this result. I was supported by a Clay Research Fellowship when I worked out most of these results, and by NSF Grant DMS1600223 for some of the period of writing it.

\section{Motivation} \label{Motivation}


Let $X$ be a variety of finite type over $\FF_p$, equipped with a Frobenius splitting. Kumar and Mehta~\cite{KM} and independently Schwede~\cite{S}, showed that there are only finitely many compatibly split subvarieties of $X$.
If $X$ is an affine variety over $\FF_p$, given by explicit equations in $\FF_p[x_1, \ldots, x_n]$, then Katzman and Schwede~\cite{KS} give an algorithm for computing all of the compatibly split ideals. 

Moreover, suppose that $\cX$ is projective over $\Spec \ZZ$ and we have a Frobenius splitting on each $\cX(p)$. The result of Schwede and Tucker~\cite{ST} gives a uniform bound, independent of $p$, for the number of compatibly split subvarieties of $\cX(p)$.
One of the most common ways that we can obtain splittings on all $\cX(p)$ at once is the following: Suppose that $\cX$ is smooth over $\Spec \ZZ$ and let $\sigma$ be a section of $\omega_{\cX/\Spec \ZZ}^{-1}$. Then $\sigma^{p-1}$ induces a near-splitting on $\cX(p)$ (\cite[Theorem 1.3.8]{BK}).
Knutson~\cite{Knutson} has promoted the problem of explicitly listing these compatibly split varieties of $\cX(p)$ for various classical splittings: See~\cite{KLS} for $X=G/P$ and $\sigma$ the divisor which vanishes on the Schubert and opposite Schubert varieties; see~\cite{Rajchgot} for $X = \mathrm{Hilb}^n \AA^2$ and $\sigma$ induced by the standard volume form on $\AA^{2n}$.  
Katzmann and Schwede's method is only of partial use in these cases, because it works with one particular prime and one particular variety. It can compute all compatibly split subvarieties of $\mathrm{Hilb}^n\  \mathrm{Spec}\  \FF_p[x,y]$ for some particular $p$ and $n$, but it does not make sense to ask about all $p$ and/or all $n$.

To attack Knutson's problem, one generally proceeds inductively on the codimension of the compatibly split subvariety. 
If $X$ is R1, it is easy to describe the compatibly split divisors: They are the zeroes of the section $\sigma$.
If $\sigma$ vanishes on a divisor $D$, and $X$ is R1 along $D$, then the splitting on $D$ is induced by $(\mathrm{Res}(\sigma))^{-1}$, where $\mathrm{Res} : \omega_X(D) \to \omega_D$ is the residue map.
One can then attempt inductively to describe the compatibly split subvarieties of $D$.
Unfortunately, it is not always true that a compatibly split subvariety is contained in a compatibly split divisor, but this does hold if $X$ is smooth on the locus where $\sigma$ is nonzero.

At some point in this inductive process, we may encounter subvarieties which are not R1. A natural approach to take is to replace them by their normalizations.
Theorem~\ref{MT} tells us that we may lose some information by doing so, but we only lose information at finitely many primes.

We conclude with an example of finding all compatibly split subvarieties for a given splitting. We do the computation twice, once without using normalization and once with normalization.

\begin{Example}
Let $\cX = \Spec \ZZ[u,v,w]$. Let $\sigma$ be the section
\[ \sigma := \frac{uvw - v^2-w^2}{du \wedge dv \wedge dw} \]
of $\omega_{\cX/\Spec \ZZ}^{-1}$. For every $p$, we have a splitting on $\cX(p)$ induced by $\sigma^{p-1}$. (For example, we can use the criterion of~\cite[Theorem 4]{Knutson} to see that this near-splitting is a splitting.)
We first work out the compatibly split subvarieties of $\cX(p)$ without using normalization.

Since $\sigma$ vanishes on the surface $\cH:= \{ uvw=v^2+w^2 \}$, we see that $\cH(p)$ is compatibly split for every $p$.
The surface $\cH$ is singular on the line $\cL = \{ v=w=0 \}$ and otherwise smooth.
We have $\sigma^{-1}  = \tfrac{d(uvw-v^2-w^2)}{uvw-v^2-w^2} \wedge \tfrac{dv \wedge d w}{vw}$ so the residue of $\sigma^{-1}$ along $\cH$ is $\tfrac{dv \wedge d w}{vw}$. We deduce that, on the smooth locus of $\cH(p)$, the splitting is induced by $\tau^{p-1}$ where $\tau$ is the section $\tfrac{vw}{dv \wedge dw}$ of $\omega^{-1}_{\cH/\Spec \ZZ}$.

Away from $\cL$, the section $\tau$ is nonvanishing, so all compatibly split subvarieties of $\cH(p)$ are contained in $\cL(p)$. The line $\cL(p)$ is compatibly split. For $p=2$, the splitting on $\cL(p)$ is induced by $u/du$. 
For general $p$, the splitting on $\cL(p)$ is induced by $(du)^{1-p} \sum u^k \left( [u^{k} v^{p-1} w^{p-1}] (uvw-v^2-w^2)^{p-1} \right)$, where $[m] f$ is the coefficient of the monomial $m$ in the polynomial $f$. Some manipulation with binomial coefficients simplifies this expresssion: when $p$ is odd, the splitting on $\cL(p)$ is induced by $( u^2-4)^{(p-1)/2} (du)^{-(p-1)}$. 
We note that this is not a $(p-1)$-st power of an anticanonical section, and the computations we had to perform to find it were more difficult than taking residues.

When $p=2$ we further see that the origin $u=v=w=0$ is split since $u/du$ vanishes there. For odd $p$, there are no split points because the zeroes of $(u^2-4)^{(p-1)/2}$ have multiplicity less than $p-1$.

We now repeat the computation using normalizations. As before, since $\sigma$ vanishes on $\cH$, we know that $\cH(p)$ is compatibly split.
Normalizing $\cH$ adds the functions $x:=v/w$ and $x^{-1}$, which obey the monic polynomials $x^2-u x + 1 = x^{-2} - u x^{-1} + 1 =0$. We have $u = x+x^{-1}$ and $v=wx$, so the normalization $\widetilde{\cH}$ is $\Spec \ZZ[w, x^{\pm 1}]$. The splitting on $\widetilde{\cH}$ is induced by $\widetilde{\tau}$, the pullback of $\tau$ to $\widetilde{\cH}$. We compute $\widetilde{\tau} = \tfrac{wx}{dw \wedge dx}$. The section $\widetilde{\tau}$ vanishes only on $\widetilde{\cL} = \{ (w,x) \in \widetilde{\cH} : w=0 \}$ (recall that we have inverted $x$). The splitting on $\widetilde{\cL}(p)$ is induced by $(x/dx)^{p-1}$  (computed by taking residues again). Since we have inverted $x$, the section $x/dx$ is nowhere vanishing and there are no split points. Taking the images of $\widetilde{\cH}$ and $\widetilde{\cL}$ in $\AA^3$, we see that $\cH$ and $\cL$ are compatibly split.

We remark that the map $\widetilde{\cL}(2) \to \cL(2)$ is the wildly ramified cover from Example~\ref{wild}.
\end{Example}

The benefit of the second computation is that we were able to do all our computations by repeatedly taking residues of differential forms, and all computations were uniform in $p$.
The benefit of the first computation is that we found the split point $u=v=w=0$ in $\cX(2)$, which the computation with normalizations missed. The split point has a preimage $(w,x)=(0,1)$ in $\widetilde{\cH}$, but this point is not compatibly split so we did not find it.

One can imagine either set of benefits being advantageous in different situations, but at least sometimes, we will prefer the benefits of the second method.
The purpose of Theorem~\ref{MT} is to assure us that we can only lose information at finitely many primes if we select that method.

\section{Proofs}~\label{proofs}

The letter $p$ will always denote a prime. We begin with some lemmas about field extensions in characteristic $p$.

\begin{Lemma} 
Let $L/K$ be a finite dimensional extension of fields of characteristic $p$. Then $\Tr_{L/K}(x^p) = \Tr_{L/K}(x)^p$.
\end{Lemma}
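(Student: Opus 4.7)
The plan is to handle the statement by reducing to separable extensions, where the trace is a sum of Galois conjugates.

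First I would recall the standard formula
\[ \Tr_{L/K}(x) = [L:K]_i \sum_{\sigma} \sigma(x), \]
where $[L:K]_i$ denotes the inseparable degree and $\sigma$ ranges over the $K$-embeddings of $L$ into a fixed algebraic closure $\overline{K}$. Equivalently, one factors $L/K$ through its separable closure $L_s$ and uses transitivity of trace, noting that $\Tr_{L/L_s}$ vanishes identically whenever $L \neq L_s$ (since the trace of any element of a purely inseparable extension of degree $> 1$ is zero, as one sees from the minimal polynomial $t^{p^m} - x^{p^m}$).

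Then I would split into two cases. If $L/K$ is not separable, the formula above shows $\Tr_{L/K} \equiv 0$, so both sides of the desired identity vanish. If $L/K$ is separable, writing $\Tr_{L/K}(x) = \sum_i \sigma_i(x)$ for $K$-embeddings $\sigma_1,\dots,\sigma_n$, I would compute
\[ \Tr_{L/K}(x^p) = \sum_i \sigma_i(x^p) = \sum_i \sigma_i(x)^p = \Bigl(\sum_i \sigma_i(x)\Bigr)^p = \Tr_{L/K}(x)^p, \]
using that each $\sigma_i$ commutes with the Frobenius $y \mapsto y^p$, and that Frobenius is additive in characteristic $p$.

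There is no real obstacle; the only mildly delicate point is making sure the inseparable case is disposed of correctly, since $y \mapsto y^p$ is not $K$-linear (only $\FF_p$-linear), so one cannot simply invoke $K$-linearity of trace. The trick is that in the inseparable case both sides are zero for trivial reasons, and in the separable case one uses the embedding description of the trace rather than its $K$-linearity.
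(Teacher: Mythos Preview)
Your argument is correct. Both you and the paper dispose of the inseparable case identically (both sides vanish), but in the separable case you take a different route: you invoke the description of $\Tr_{L/K}$ as the sum over $K$-embeddings $\sigma_i: L \hookrightarrow \overline{K}$ and use that each $\sigma_i$, being a ring map, commutes with Frobenius. The paper instead stays with the matrix definition of trace: it chooses a $K$-basis $e_1,\dots,e_d$ of $L$, uses separability (via linear disjointness of $L$ and $K^{1/p}$) to see that $e_1^p,\dots,e_d^p$ is again a $K$-basis, and observes that if $(a_{ij})$ is the matrix of multiplication by $x$ in the first basis then $(a_{ij}^p)$ is the matrix of multiplication by $x^p$ in the second, so $\Tr(x^p)=\sum a_{ii}^p=(\sum a_{ii})^p$. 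Your version is shorter and perhaps more transparent, at the cost of importing the embedding formula for the trace; the paper's version is more self-contained from linear algebra but needs the linear-disjointness characterization of separability.
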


\begin{proof}
If $L/K$ is not separable, then both sides are zero, so assume that $L/K$ is separable.
Let $e_1$, \dots, $e_d$ be a basis for $L$ over $K$. 
Since $L/K$ is separable,  the extensions $L$ and $K^{1/p}$ are linearly disjoint, so $e_1$, $e_2$, \dots $e_d$ is a basis for $L^{1/p}$ over $K^{1/p}$.
We deduce that $e_1^p$, \dots, $e_d^p$ are a basis for $L$ over $K$.

If $(a_{ij})$ is the matrix of multiplication by $x$ in the basis $e_i$, then $(a_{ij}^p)$ is the matrix for multiplication by $x^p$ in the basis $e_i^p$.
Using the first basis $\sum a_{ii}=\Tr(x)$ and, using the second basis, $\sum a_{ii}^p = \Tr(x^p)$. So $\Tr(x^p) = \Tr(x)^p$.
\end{proof}

\begin{Lemma} \label{Key}
Let $L/K$ be a finite dimensional extension of fields of characteristic $p$ and let $\phi$ be a splitting on $L$ which restricts to a splitting of $K$.
Then $\Tr_{L/K} \circ \phi = \phi \circ \Tr_{L/K}$.
\end{Lemma}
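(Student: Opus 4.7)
My plan is to reduce to the separable case and then compute in a well-chosen basis.

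First, if $L/K$ is not separable then $\Tr_{L/K}$ vanishes identically. Letting $M$ be the separable closure of $K$ in $L$, one has $\Tr_{L/K} = \Tr_{M/K} \circ \Tr_{L/M}$, and the purely inseparable trace $\Tr_{L/M}$ is zero when $[L:M]$ is a positive power of $p$. In this case both $\Tr_{L/K}(\phi(x))$ and $\phi(\Tr_{L/K}(x)) = \phi(0) = 0$ vanish, and there is nothing to prove.

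For the separable case I would exploit the twisted basis produced by the previous lemma. Fix any $K$-basis $e_1, \ldots, e_d$ of $L$; that lemma tells us $e_1^p, \ldots, e_d^p$ is also a $K$-basis. Every $x \in L$ therefore has a unique expression $x = \sum_j b_j e_j^p$ with $b_j \in K$. This is the coordinate system that makes the semilinearity relation $\phi(a^p b) = a \phi(b)$ directly applicable term-by-term, yielding $\phi(x) = \sum_j e_j \phi(b_j)$. The hypothesis that $\phi$ restricts to a splitting on $K$ enters precisely here, to guarantee $\phi(b_j) \in K$; without this the $K$-linearity of the trace could not be used on the right-hand side below.

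From this point the computation is mechanical and I do not expect any real obstacle. On one side, $K$-linearity of $\Tr_{L/K}$ gives $\Tr_{L/K}(\phi(x)) = \sum_j \phi(b_j)\, \Tr_{L/K}(e_j)$. On the other side, $\Tr_{L/K}(x) = \sum_j b_j\, \Tr_{L/K}(e_j^p) = \sum_j b_j\, \Tr_{L/K}(e_j)^p$ by the previous lemma, and applying $\phi(a^p b) = a \phi(b)$ with $a = \Tr_{L/K}(e_j) \in K$ yields $\phi(\Tr_{L/K}(x)) = \sum_j \Tr_{L/K}(e_j)\, \phi(b_j)$, the same sum.
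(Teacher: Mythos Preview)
Your proof is correct and follows essentially the same approach as the paper. The paper decomposes $L$ over $K^p$ via a basis $\ell_i^p v_j$ (with $\ell_i$ a $K$-basis of $L$ and $v_j$ a $K^p$-basis of $K$), whereas you work directly over $K$ using the basis $e_j^p$; your packaging is slightly more economical, but the idea and the computation are the same.
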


\begin{proof}
If $L/K$ is not separable, then both sides are zero, so assume that $L/K$ is separable.
So $L^p \cap K = K^p$. 
Let $\ell_1$, \dots, $\ell_d$ be a basis for $L$ over $K$ and let $v_1$, $v_2$, \dots, $v_s$ be a basis for $K$ over $K^p$. 
So $\ell_i^p v_j$ is a basis for $L$ over $K^p$.
Since $\phi$ preserves $K$, we have $\phi(v_j) \in K$ for all $v_j$.

Let $x = \sum a_{ij}^p \ell_i^p v_j$, for some $a_{ij} \in K$. 
Then $\phi(x) = \sum a_{ij} \ell_i \phi(v_j)$ and, since  $\phi(v_j) \in K$,  we have $\Tr(\phi(x)) = \sum a_{ij} \phi(v_j) \Tr(\ell_i)$.
On the other hand, $\Tr(x) = \sum a^p_{ij} v_j \Tr(\ell_i^p) = \sum a^p_{ij} v_j \Tr(\ell_i)^p$ and $\phi(\Tr(x)) =   \sum a_{ij} \phi(v_j) \Tr(\ell_i)$.
\end{proof}

We also need some results about trace between normal rings:
\begin{Lemma}
Let $A$ and $B$ be integrally closed domains, with $K = \Frac A$ and $L = \Frac B$, and let $f: A \to B$ be an injection making $B$ into a finite $A$-module. Then $\Tr_{L/K}$ restricts to a map $B \to A$.
\end{Lemma}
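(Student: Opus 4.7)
The plan is to show that for every $b \in B$, the trace $\Tr_{L/K}(b)$, a priori an element of $K$, actually lies in $A$. Since $B$ is a finite $A$-module, each $b \in B$ is integral over $A$, and the proof will rest on the classical principle that the minimal polynomial of an integral element over an integrally closed domain has coefficients in the base ring.

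First I would establish that the minimal polynomial $m(x) \in K[x]$ of $b$ over $K$ has all coefficients in $A$. In an algebraic closure of $L$, each root of $m(x)$ is conjugate to $b$ over $K$, so it satisfies any monic integral equation over $A$ that $b$ does; hence every root of $m(x)$ is integral over $A$. The coefficients of $m(x)$ are, up to sign, the elementary symmetric polynomials in these roots, so they are integral over $A$. They also lie in $K$ by construction, and because $A$ is integrally closed in $K$, they must lie in $A$.

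Second, I would express $\Tr_{L/K}(b)$ in terms of $m$. Writing $L$ as a free $K(b)$-module of rank $[L:K(b)]$, the matrix of multiplication by $b$ on $L$ is block-diagonal with $[L:K(b)]$ identical blocks, each having characteristic polynomial equal to $m(x)$: on $K(b) \isomorph K[x]/(m(x))$ the characteristic polynomial of multiplication by $b$ is $m(x)$, since it has degree $[K(b):K] = \deg m$ and is divisible by the minimal polynomial $m(x)$. Thus the characteristic polynomial of $b$ acting on $L$ is $m(x)^{[L:K(b)]}$. Its coefficients are universal integer polynomials in the coefficients of $m$, hence lie in $A$. The trace is (minus) the coefficient of $x^{[L:K]-1}$ in this polynomial, so $\Tr_{L/K}(b) \in A$, as required.

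I do not expect any real obstacle here: the only nontrivial ingredient is the integral-closure fact recalled above, and everything else is elementary linear algebra over $K$.
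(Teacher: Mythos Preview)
Your argument is correct. You show that the minimal polynomial $m(x)$ of $b$ over $K$ has coefficients in $A$ (roots of $m$ satisfy the same monic integral equation over $A$ that $b$ does, hence are integral; symmetric functions of them lie in $K$ and are integral, so lie in $A$), and then observe that the characteristic polynomial of $b$ on $L$ is $m(x)^{[L:K(b)]}$, whose subleading coefficient gives the trace.

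The paper takes a different route: it first disposes of the inseparable case (where $\Tr_{L/K}=0$), then in the separable case passes to the Galois closure $M/K$ with group $G$ and integral closure $C$ of $A$ in $M$, and writes $\Tr_{L/K}(\theta)=\sum_{g\in G/H} g\theta \in C\cap K = A$. Your approach is more self-contained and avoids the separability case split entirely. The paper's approach, on the other hand, introduces the objects $M$, $C$, $G$, $H$ which are immediately reused: the very next lemma (that $\Tr_{L/K}$ sends $\sqrt{\fp B}$ into $\fp$) and the proof of the main Proposition both compute with $\sum_{g\in G/H} g\theta$ inside $C$. So while your argument is cleaner for this single lemma, the paper's detour through the Galois closure is an investment that pays off downstream.
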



\begin{proof}
If $L/K$ is inseparable, the trace map is $0$, so we may assume that $L/K$ is separable. Let $M$ be the Galois closure of $L$ over $K$, let $G = \mathrm{Gal}(M/K)$ and let $H \subset G$ be the stabilizer of $L$. Let $C$ be the integral closure of $A$ in $M$, so $B \subset C$. For any $\theta \in B$, we have $\Tr_{L/K} \theta = \sum_{g \in G/H} g \theta$, where the sum runs over a set of coset representatives for $G/H$. We have $\theta \in B \subset C$, and $C$ is taken to itself by $G$, so the sum is in $C \cap K$. Since $A$ is integrally closed, $C \cap K = A$.
\end{proof}

\begin{Lemma}  \label{Trace}
Let $A$, $B$ and $f$ be as in the previous lemma, and let $\fp$ be an ideal of $A$. If $\theta \in \sqrt{\fp B}$, then $\Tr_{L/K} \theta \in \fp$.
\end{Lemma}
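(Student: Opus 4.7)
The approach is to pass to the Galois closure and express $\Tr_{L/K}(\theta)$ as a sum of Galois conjugates, each of which inherits the property that a power lies in an ideal coming from $\fp$.

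If $L/K$ is inseparable then $\Tr_{L/K} \equiv 0$ and there is nothing to show, so I assume $L/K$ is separable. Let $M$ be the Galois closure of $L/K$, $G = \Gal(M/K)$, $H$ the subgroup fixing $L$, and $C$ the integral closure of $A$ in $M$. Then $B \subseteq C$ and each $g \in G$ sends $C$ to itself, since it preserves integrality over $A$. As in the proof of the preceding lemma, $\Tr_{L/K}(\theta) = \sum_{gH \in G/H} g(\theta)$.

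Suppose $\theta^n = \sum_i p_i b_i$ with $p_i \in \fp$ and $b_i \in B$. For any $g \in G$, applying $g$ gives $g(\theta)^n = \sum_i p_i\, g(b_i) \in \fp C$, since $g$ fixes $A$ pointwise and carries $B$ into $C$. Hence each conjugate $g(\theta)$ lies in $\sqrt{\fp C}$, and summing yields $\Tr_{L/K}(\theta) \in \sqrt{\fp C}$. Combined with the preceding lemma, $\Tr_{L/K}(\theta) \in \sqrt{\fp C} \cap A$.

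The main obstacle is the final step: showing that this intersection lies in $\fp$ itself rather than just some larger ideal. The inclusion $\sqrt{\fp C} \cap A \subseteq \sqrt{\fp}$ follows from lying-over for the integral extension $A \subseteq C$: every prime $\fq$ of $A$ containing $\fp$ is contracted from some prime of $C$ containing $\fp C$ and hence $\sqrt{\fp C}$, so intersecting over such $\fq$ gives the claim. The conclusion $\Tr_{L/K}(\theta) \in \fp$ then follows provided $\fp$ is radical, which is the case in the intended application, where $\fp$ cuts out a compatibly split subvariety and is therefore reduced.
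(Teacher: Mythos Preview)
Your argument is correct and follows essentially the same route as the paper: pass to the Galois closure, use that $G$ preserves $\sqrt{\fp C}$, and conclude $\Tr_{L/K}(\theta)\in\sqrt{\fp C}\cap A$. You are in fact more careful than the paper at the final step: the paper asserts ``since $C$ is finite over $A$, this intersection is $\fp$'', whereas you correctly observe that lying-over only gives $\sqrt{\fp C}\cap A=\sqrt{\fp}$, and that one needs $\fp$ radical (which holds in the application, where $\fp$ is the prime ideal of an irreducible compatibly split $V$) to finish.
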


\begin{proof}
We follow the same logic and notations as in the previous post. As before, we may assume that $L/K$ is separable. Note that $\theta \in \sqrt{\fp C}$ and $\sqrt{\fp C}$ is taken to itself by $G$, so $\Tr_{L/K} \theta = \sum_{g \in G/H} g \theta \in \sqrt{\fp C} \cap A$. Since $C$ is finite over $A$, this intersection is $\fp$.
\end{proof}

We now prove the central case of Theorem~\ref{MT}.

\begin{Proposition} \label{MainCase}
Let $\alpha: \cY \to \cX$ be a finite surjective map of irreducible varieties of characteristic $p$, with $\cX$ and $\cY$ both normal. Suppose that $p$ is greater than $\deg \alpha$.
Then, for $\phi$ any compatible splittings on $\cX(p)$ and $\cY(p)$, and $V$ any compatibly split subvariety of $\cX(p)$, the variety $\alpha^{-1}(V)^{\rr}$ is also split.
\end{Proposition}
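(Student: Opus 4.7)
I plan to reduce to a computation in coordinate rings. Working affinely, write $A$ and $B$ for the rings of $\cX$ and $\cY$, with $K = \Frac A$, $L = \Frac B$, and $d = [L:K] \le \deg \alpha < p$. The hypothesis $d < p$ immediately forces $L/K$ to be separable, so the trace $\Tr = \Tr_{L/K}$ is nonzero and restricts to a map $B \to A$. Let $\fp \subseteq A$ be the radical ideal of $V$, so $\phi(\fp) \subseteq \fp$; the goal is to show that $\fq := \sqrt{\fp B}$ satisfies $\psi(\fq) \subseteq \fq$.

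The heart of the argument is the chain of identities, valid for every $\theta \in \fq$ and every $u \in B$,
\[ \Tr(u\,\psi(\theta)) \;=\; \Tr\bigl(\psi(u^p \theta)\bigr) \;=\; \phi\bigl(\Tr(u^p \theta)\bigr) \;\in\; \phi(\fp) \;\subseteq\; \fp. \]
The first equality is the splitting relation $\psi(u^p \theta) = u\,\psi(\theta)$; the second is Lemma~\ref{Key} combined with compatibility of $\phi$ and $\psi$; and the containment uses $u^p \theta \in \fq$, so Lemma~\ref{Trace} puts $\Tr(u^p \theta)$ in $\fp$, which $\phi$ preserves.

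Specializing to $u = \psi(\theta)^{n-1}$ gives $p_n := \Tr(\psi(\theta)^n) \in \fp$ for every $n \ge 1$. I would then invoke Newton's identities: letting $e_1, \ldots, e_d \in A$ denote the elementary symmetric functions in the Galois conjugates of $\psi(\theta)$ (equivalently, up to sign, the coefficients of the characteristic polynomial of multiplication-by-$\psi(\theta)$ on $L$), one has
\[ k\,e_k \;=\; \sum_{i=1}^{k} (-1)^{i-1} e_{k-i}\, p_i \qquad (e_0 := 1,\; 1 \le k \le d). \]
Since $d < p$, each $k \in \{1, \ldots, d\}$ is invertible in $\FF_p \subseteq A$, so induction on $k$ yields $e_k \in \fp$ for every $k$. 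By Cayley--Hamilton applied in $L$, $\psi(\theta)$ satisfies its own characteristic polynomial, so $\psi(\theta)^d$ is an $A$-linear combination of $1,\psi(\theta),\ldots,\psi(\theta)^{d-1}$ with coefficients in $\fp$; therefore $\psi(\theta)^d \in \fp B$ and $\psi(\theta) \in \sqrt{\fp B} = \fq$, as desired.

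The hypothesis $p > \deg \alpha$ enters twice: implicitly, to make $L/K$ separable so that the trace is well-behaved, and, more substantively, to invert $1,2,\ldots,d$ when running Newton's identities. Everything else is a formal consequence of the trace lemmas already established, and I expect the Newton's-identities step to be the only genuine obstacle.
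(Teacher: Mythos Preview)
Your proof is correct, and it takes a genuinely different route from the paper's.

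Both arguments start from the same two trace facts: Lemma~\ref{Key} ($\Tr\circ\psi=\phi\circ\Tr$) and Lemma~\ref{Trace} ($\theta\in\sqrt{\fp B}\Rightarrow\Tr\theta\in\fp$). From there the paths diverge. The paper argues by contradiction: it chooses $x\in\fq$ with $\psi(x)\notin\fq$, uses the Chinese Remainder Theorem to arrange $\psi(x)\equiv 1$ at one prime over $\fp$ and $\equiv 0$ at the others, passes to the Galois closure $M/K$, and computes $\Tr_{L/K}\psi(x)$ modulo a prime of $M$ as a count of cosets---a positive integer at most $\deg\alpha<p$, contradicting $\Tr_{L/K}\psi(x)\in\fp$. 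Your argument is direct: from $\Tr(u\,\psi(\theta))\in\fp$ for all $u$ you extract the power sums $p_n=\Tr(\psi(\theta)^n)\in\fp$, invert Newton's identities (using that $1,\ldots,d$ are units since $d<p$) to land all coefficients of the characteristic polynomial in $\fp$, and finish with Cayley--Hamilton. Your approach avoids the Galois closure and the CRT normalization entirely, and it makes transparent exactly where $p>\deg\alpha$ is spent (inverting $1,\ldots,d$). The paper's approach, on the other hand, gives a more vivid geometric picture---the obstruction is literally the number of sheets of $\cY$ over $\cX$ meeting a given component of $\alpha^{-1}(V)$---which connects nicely to the ramification heuristic motivating the result.
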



\begin{proof}
We may assume that $V$ is irreducible, and we may localize to the generic point of $V$.
Let $A$ be the local ring of the generic point of $V$, and let $B$ be the semi-local ring of $Y$ at $f^{-1}(V)$. 
So $A$ and $B$ are noetherian integrally closed domains.
Let $K = \Frac A$ and $L = \Frac B$.

Let $\fp$ be the ideal of $V$ in $A$, set $J = \sqrt{\fp B}$ and let the prime decomposition of $J$ be $J = \bigcap \fq_i$.

Suppose, for the sake of contradiction, that $\phi(J) \not \subseteq J$, so there is some $x \in J$ with $\phi(x) \not \in J$ and thus $\phi(x) \not \in \fq_i$ for some $i$; without loss of generality, say $\phi(x) \not \in \fq_1$.
If we replace $x$ by $y^p x$ then we replace $\phi(x)$ by $y \phi(x)$; choosing $y$ appropriately, we can arrange that $\phi(x) \equiv 1 \bmod \fq_1$ and $\phi(x) \equiv 0 \bmod \fq_j$ for all $j \neq 1$.

By Lemma~\ref{Trace}, we have $\Tr_{L/K}(x) \in \fp$ so, since $\fp$ is compatibly split $\phi \left( Tr_{L/K}(x) \right) \in \fp$.
By Lemma~\ref{Key}, we deduce $\Tr_{L/K} \phi(x) \in \fp$. We will now compute $\Tr_{L/K} \phi(x) \bmod \fp$ directly and see that it is not zero, to obtain a contradiction.

Since $[L:K]=\deg \alpha < p$, the extension $L/K$ is separable.
Let $M$ be the Galois closure of $L$ over $K$; let $C$ be the integral closure of $B$ in $M$; let $G = \Gal(M/K)$; let $H$ be the stabilizer of $L$. 
So $\Tr_{L/K} \phi(x) = \sum_{g \in G/H} g \phi(x)$ where the sum runs over any collection of coset representatives for $G/H$. 

For $\fr$ a prime of $C$ lying above $\fp$, we have $\phi(x) \equiv 1 \bmod \fr$ if $\fr \cap B = \fq_1$ and $\phi(x) \equiv 0 \bmod \fr$ if $\fr \cap B$ is some other $\fq_j$.
Let $\fr_1$ be a prime of $C$ lying above $\fq_1$. 
So we have 
\[ \Tr_{L/K} \phi(x) =  \sum_{g \in G/H} g \phi(x) \equiv \# \{ g \in G/H : g \fr_1 \cap B = \fq_1 \} \bmod \fr_1. \]
The right hand side is a positive integer and at most $\#(G/H) = \deg  \alpha$. Since $p > \deg \alpha$, we deduce that $\Tr_{L/K} \phi(x) \not \equiv 0 \bmod \fr_1$. But $\fr_1 \cap A = \fp$, so we deduce that $\Tr_{L/K} \phi(x) \not \equiv 0 \bmod \fp$, as desired.


%
%
%
%
\end{proof}

We now prove Theorem~\ref{MT}. Our proof is by induction on $\dim \cX$.

We can immediately pass to irreducible components, and thus reduce to the case that $\cX$ and $\cY$ are integral.
Also, if $f$ is not surjective, then we can factor $f$ as $\cY \to f(\cY) \to \cX$. 
Any compatible splitting of $\cX$ and $\cY$ will pass to a splitting of $f(\cY)$, and we can reduce to the map $\cY\to f(\cY)$, whose image has smaller dimension, and apply induction.

Thus, we are reduced to the case that $\cX$ and $\cY$ are integral, and $f$ surjective.
Let $\widetilde{\cX}$ and $\widetilde{\cY}$ denote the normalizations of $\cX$ and $\cY$.
We write $\widetilde{X}(p)$ for $\widetilde{\cX} \times \Spec \FF_p$, and likewise for $\cY$.
Write $\widetilde{\alpha}$ for the map $\widetilde{\cY} \to \widetilde{\cX}$, and write $\mu$ and $\nu$ for the maps $\widetilde{\cX} \to \cX$ and $\widetilde{\cY} \to \cY$.

\[\xymatrix{
\widetilde{\cY} \ar[r]^{\widetilde{\alpha}} \ar[d]^{\nu} & \widetilde{\cX} \ar[d]^{\mu} \\
\cY \ar[r]^{\alpha} & \cX \\
}\]

\begin{Lemma}
For all but finitely many $p$, the fiber $\widetilde{\cX}(p)$ will be the normalization of $\cX(p)$. 
\end{Lemma}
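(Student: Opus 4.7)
The plan is to reduce to the affine case and verify the universal property of normalization one fiber at a time. Since normalization commutes with open immersions and the statement is Zariski-local on $\cX(p)$, I cover $\cX$ by affine opens $\Spec A$, with $\widetilde{\cX}$ correspondingly covered by $\Spec B$, where $B$ is the integral closure of the finite-type $\ZZ$-domain $A$ in its field of fractions $K = \Frac A$ (the ring $B$ is finite over $A$, hence finitely generated over $\ZZ$, because $A$ is excellent). Discarding the trivial case where $\cX$ lives over a single residue characteristic $p_0$ (in which $\widetilde{\cX}(p_0) = \widetilde{\cX}$ is the normalization of $\cX(p_0) = \cX$ by construction), I may assume $A$ is $\ZZ$-flat. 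It then suffices to show, for each affine piece $(A,B)$, that $B \otimes \FF_p$ is the normalization of $A \otimes \FF_p$ for all but finitely many $p$.

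A finite extension $R \hookrightarrow R'$ of reduced rings realizes $R'$ as the normalization of $R$ exactly when $R'$ is normal and the inclusion is birational, i.e.\ becomes an isomorphism upon localizing at each generic point of $\Spec R$. I would establish these for $A \otimes \FF_p \hookrightarrow B \otimes \FF_p$ in turn, discarding finitely many primes each time. For normality of $B \otimes \FF_p$: the generic fiber $B \otimes \QQ$ is a localization of the normal ring $B$, hence normal, and being of finite type over the characteristic-zero field $\QQ$ it is geometrically normal; the standard EGA result that geometric normality of the generic fiber propagates to a dense open of the base then yields $B \otimes \FF_p$ normal for cofinitely many $p$. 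Applied to $A$ with geometric reducedness in place of normality, the same result gives $A \otimes \FF_p$ reduced for almost all $p$. Finiteness of $A \otimes \FF_p \to B \otimes \FF_p$ is automatic. Injectivity follows from generic flatness of $B/A$ over $\ZZ$: for $p$ in the flat locus the short exact sequence $0 \to A \to B \to B/A \to 0$ remains exact modulo $p$.

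The main obstacle is birationality. Let $\mathfrak{c} = \{a \in A : aB \subseteq A\}$ be the conductor; since $\mathfrak{c}$ annihilates $B/A$, the inclusion $A \otimes \FF_p \hookrightarrow B \otimes \FF_p$ is an isomorphism away from the vanishing locus of $\mathfrak{c}$ in $\Spec(A \otimes \FF_p)$, and what must be shown is that this locus contains no generic point of $\Spec(A \otimes \FF_p)$. Equivalently, $\dim (A/\mathfrak{c}) \otimes \FF_p < \dim A \otimes \FF_p$ for $p$ outside a finite set. Since $B$ is finite over $A$ with $\Frac B = \Frac A$, the conductor $\mathfrak{c}$ is a nonzero ideal of the domain $A$, and therefore $\dim A/\mathfrak{c} < \dim A$. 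Generic flatness of $A$ and of $A/\mathfrak{c}$ over $\ZZ$ gives $\dim A \otimes \FF_p = \dim A - 1$ and $\dim (A/\mathfrak{c}) \otimes \FF_p \leq \dim A/\mathfrak{c} - 1$ for $p$ outside finitely many primes, so the strict inequality passes to the $\FF_p$-fibers. Combining the four ingredients, $A \otimes \FF_p \hookrightarrow B \otimes \FF_p$ is a finite injective birational extension from a reduced ring to a normal one, which identifies $B \otimes \FF_p$ as the normalization of $A \otimes \FF_p$.
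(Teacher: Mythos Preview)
Your proof is correct and takes essentially the same route as the paper: establish normality of $\widetilde{\cX}(p)$ for almost all $p$ by spreading out from the characteristic-zero generic fiber via the EGA constructibility results, and check finiteness and birationality separately. The paper argues through Serre's criterion, citing EGA~IV~9.9.3 for S2 and 9.9.5 for geometric R1 independently, and simply asserts birationality for almost all $p$ without detail; you instead treat geometric normality as a single spreadable property and supply an explicit conductor argument for birationality, which is a useful addition. One small point to tighten: the step from $\dim (A/\mathfrak{c}) \otimes \FF_p < \dim A \otimes \FF_p$ to ``no generic point of $\Spec(A \otimes \FF_p)$ lies in $V(\mathfrak{c})$'' tacitly uses equidimensionality of $\Spec(A \otimes \FF_p)$, which holds since $A$ is a domain of finite type over $\ZZ$ (hence catenary with the dimension formula) and every minimal prime over $(p)$ has height one by Krull's Hauptidealsatz.
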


\begin{proof} For all but finitely many primes $p$, the map $\widetilde{\cX}(p) \to \cX(p)$ will be birational, and it will be finite for all $p$, so it is enough to show $\widetilde{\cX}(p)$ is normal for all but finitely many $p$. We write $\cX(0)$ and $\widetilde{\cX}(0)$ for the generic fibers. 
 Since $\widetilde{\cX}$ is S2, we know that $\widetilde{\cX}(0)$ is S2. The set of $p \in \Spec \ZZ$ for which $\widetilde{\cX}(p)$ is S2 is constructible by \cite[EGA IV 9.9.3]{EGA}. A constructible subset of $\Spec \ZZ$ which contains the generic point is co-finite.
Since $\widetilde{\cX}$ is R1, we know that $\widetilde{\cX}(0)$ is R1 and, since $\QQ$ is of characteristic zero, $\widetilde{\cX}$ is also geometrically R1. The set of $p$ for which $\widetilde{\cX}(p)$ is geometrically R1 is constructible by \cite[EGA IV 9.9.5]{EGA} so, again, this set is co-finite.
\end{proof}

We will restrict ourselves to primes large enough that $\widetilde{\cX}(p)$ is the normalization of $\cX(p)$ and $\widetilde{\cY}(p)$ is the normalization of $\cY(p)$.
Any compatible splittings on $\cX(p)$ and $\cY(p)$ will give splittings of the normalizations  $\widetilde{\cX}(p)$ and $\widetilde{Y}(p)$, and all of these splittings will be compatible.

Let $\Spec A$ be an affine chart on $\cX$, and $\widetilde{A}$ the normalization of $A$.
Let $D$ be the conductor $\{ u \in A : u \widetilde{a} \in A \mbox{\ for\ all\ $\widetilde{a} \in \widetilde{A}$} \}$.
Similarly, let $D(p) = \{ u \in A/p A : u \widetilde{a} \in A/pA \mbox{\ for\ all\ } \widetilde{a} \in \widetilde{A}/p\widetilde{A}  \}$.
There is an obvious map $D/p D \to D(p)$ deriving from the map $A \to A/p A$.

\begin{Lemma}
For all but finitely many $p$, the map $D/p D \to D(p)$ is an isomorphism.
\end{Lemma}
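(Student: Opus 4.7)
The plan is to interpret both $D$ and $D(p)$ as annihilators of suitable finitely generated $A$-modules and to deduce the isomorphism from the fact that, for all but finitely many $p$, the relevant modules are $p$-torsion-free. Set $M := \widetilde{A}/A$; since $\widetilde{A}$ is module-finite over $A$, $M$ is a finitely generated $A$-module, and by definition $D = \mathrm{Ann}_A(M)$. The ring map $A \to E := \mathrm{End}_A(M)$ sending $a$ to multiplication-by-$a$ has kernel exactly $D$, so there is an embedding $A/D \hookrightarrow E$; let $C$ denote its cokernel. Since $M$ is finitely presented and $A$ is Noetherian, $E$ embeds into $M^m$ (where $m$ is the number of generators of $M$), so $E$ and hence $C$ are finitely generated $A$-modules.

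I would next restrict to primes $p$ for which $p$ is a non-zero-divisor on each of the three finitely generated $A$-modules $M$, $A/D$, and $C$. For any finitely generated $A$-module $N$, the $\ZZ$-torsion submodule is itself finitely generated (by Noetherianness), so killed by a fixed positive integer, and only finitely many primes divide that integer. Under this assumption, tensoring $0 \to A \to \widetilde{A} \to M \to 0$ with $A/pA$ remains exact (the obstruction $M[p]$ vanishes), yielding the identification $D(p) = \mathrm{Ann}_{A/pA}(M/pM)$; similarly tensoring $0 \to D \to A \to A/D \to 0$ with $A/pA$ produces an injection $D/pD \hookrightarrow A/pA$ with image $\bar D := (D + pA)/pA$. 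The containment $\bar D \subseteq D(p)$ is then immediate from $D \cdot M = 0$.

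The crux is the reverse containment. Suppose $u \in A$ represents an element of $D(p)$, i.e., $uM \subseteq pM$. Because $p$ acts injectively on $M$, the multiplication map $\mu_u : M \to M$ factors uniquely as $\mu_u = p\nu$ for some $A$-linear $\nu \in E$. Now $\mu_u$ already lies in the image of $A/D \hookrightarrow E$, so the class of $\nu$ in $C = E/(A/D)$ is killed by $p$; since $C[p] = 0$, in fact $\nu$ itself lies in $A/D$, say $\nu = \mu_a$ for some $a \in A$. Then $\mu_{u - pa} = 0$, so $u - pa \in D$ and $\bar u \in \bar D$, as desired.

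The main obstacle is this endomorphism-ring step. The map $A/D \to E$ need not be surjective in general, and a priori the endomorphism $\nu$ could fail to be multiplication by any element of $A$. What rescues the argument is that $E$, and hence $C$, is a finitely generated $A$-module, which bounds the $\ZZ$-torsion of $C$ and rules out this obstruction at all but finitely many primes. Everything else is routine bookkeeping with short exact sequences and the generic vanishing of $\ZZ$-torsion in finitely generated modules.
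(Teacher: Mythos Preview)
Your proof is correct and follows essentially the same strategy as the paper's: both recognize $D$ as the annihilator of the finitely generated $A$-module $M=\widetilde{A}/A$ and reduce the question to the vanishing of $p$-torsion in a few auxiliary finitely generated $A$-modules for large $p$ (the paper invokes generic freeness, you argue directly that the $\ZZ$-torsion of a finitely generated $A$-module is bounded). The only packaging difference is that the paper works with the explicit map $A\to M^N$, $u\mapsto(ue_1,\ldots,ue_N)$, determined by a spanning set and identifies both $D/pD$ and $D(p)$ directly as its kernel modulo $p$, whereas you route the reverse containment through a division-by-$p$ argument inside $\mathrm{End}_A(M)$.
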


\begin{proof}
Choose an $A$-spanning set $e_1$, $e_2$, \dots, $e_N$ for $\widetilde{A}$ as an $A$-module. 
Then $D$ is the kernel of the map $A \to (\widetilde{A}/A)^N$ given by $u \mapsto (u e_1, u_2, \ldots, u e_N)$.
Let $E$ be the image of this map $A \to (\widetilde{A}/A)^N$ and let $F$ be the cokernel.
So we have short exact sequences of $A$-modules $0 \to D \to A \to E \to 0$ and $0 \to E \to (\widetilde{A}/A)^N \to F \to 0$.

Since $E$ is a finitely generated $A$-module and $A$ is finitely generated over $\ZZ$, by Grothendieck's generic freeness theorem, $\Tor_1^A(E, A/pA)$ and $\Tor^1(F,A/pA)$ are $0$ for all but finitely many $p$.

Whenever this $\Tor$ vanishes, we have a short exact sequence $0 \to D/pD \to A/pA \to E/pE \to 0$ and an injection $E/p E \to (\widetilde{A}/p A)^N$, so $D/p D$ is the kernel of $A/p A \to (\widetilde{A}/p A)^N$. But $D(p)$ is also defined as this kernel, so $D(p) \cong D/pD$ and tracing through the diagrams shows that this isomorphism is the map described above.
\end{proof}

We will now further restrict our list of primes to those primes for which $D(p) = D/pD$. 

\begin{Lemma}
For any prime $p$, and any Frobenius splitting on $A/p$, the ideal $D(p)$ is compatibly split in $A/p A$.
\end{Lemma}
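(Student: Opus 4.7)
The plan is to reduce the verification to a one-line calculation by first extending $\phi$ to the normalization.

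Step 1 (the substance of the proof). Extend $\phi: A/pA \to A/pA$ uniquely to a Frobenius-linear map $\widetilde{\phi}: \widetilde{A}/p\widetilde{A} \to \widetilde{A}/p\widetilde{A}$ agreeing with $\phi$ on $A/pA$. Any Frobenius-linear map extends uniquely to the total ring of quotients by the formula $\phi(a/s) = \phi(as^{p-1})/s$, and one then checks that this extension carries elements integral over $A/pA$ to integral elements, so it sends $\widetilde{A}/p\widetilde{A}$ into itself. This is the ring-theoretic incarnation of the standard fact that Frobenius splittings lift to normalizations (cf.~\cite[Prop.~1.1.7]{BK}).

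Step 2 (formal consequence). Take $u \in D(p)$ and $\widetilde{a} \in \widetilde{A}/p\widetilde{A}$. Then $\widetilde{a}^p \in \widetilde{A}/p\widetilde{A}$, so $u\widetilde{a}^p \in A/pA$ by the defining property of the conductor. Using Frobenius-linearity of $\widetilde{\phi}$ and the fact that $\widetilde{\phi}$ restricts to $\phi$ on $A/pA$,
\[ \widetilde{a}\,\phi(u) \;=\; \widetilde{a}\,\widetilde{\phi}(u) \;=\; \widetilde{\phi}(\widetilde{a}^p u) \;=\; \phi(\widetilde{a}^p u). \]
The right-hand side lies in $A/pA$ because $\phi$ preserves $A/pA$. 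Therefore $\widetilde{a}\,\phi(u) \in A/pA$ for every $\widetilde{a}$, which is exactly the condition $\phi(u) \in D(p)$. Hence $\phi(D(p)) \subseteq D(p)$ and $D(p)$ is compatibly split.

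The main obstacle is Step 1: one must check that the Frobenius-linear extension to the total quotient ring does indeed preserve integrality, and one must be somewhat careful because for an arbitrary prime $p$ the map $A/pA \to \widetilde{A}/p\widetilde{A}$ need not be injective and $\widetilde{A}/p\widetilde{A}$ need not be reduced. Once $\widetilde{\phi}$ is produced, the rest of the argument is purely formal.
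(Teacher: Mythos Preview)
Your argument is essentially the paper's argument. The paper's entire proof is the single line $\phi(u)\widetilde{a}=\phi(u\widetilde{a}^p)\in A/pA$, which is exactly your Step~2; the paper simply does not isolate your Step~1 as a separate issue. The equation $\phi(u)\widetilde{a}=\phi(u\widetilde{a}^p)$ already presumes that $\phi$ has been extended to act on products involving $\widetilde{a}\in\widetilde{A}/p\widetilde{A}$, and the paper has arranged this a few paragraphs earlier by restricting to primes for which $\widetilde{A}/p\widetilde{A}$ is the normalization of $A/pA$ and noting that splittings pass to normalizations. So you are making explicit a step the paper leaves implicit, and you correctly flag that for a truly arbitrary prime $p$ this extension is subtle (the map $A/pA\to\widetilde{A}/p\widetilde{A}$ can fail to be injective and the target can fail to be reduced, so the ``extend to the total quotient ring and check integrality'' recipe does not literally apply); in the paper's actual use of the lemma those bad primes have already been excluded.
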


\begin{proof}
Let $u \in D(p)$ and $\widetilde{a} \in \widetilde{A}/p \widetilde{A}$. We must show that $\phi(u) \widetilde{a} \in A/p A$. 
We have $\phi(u) \widetilde{a} = \phi(u \widetilde{a}^p)$. 
Since $u \in D(p)$, we know that $u \widetilde{a}^p \in A/pA$, so $\phi(u \widetilde{a}^p) \in A/pA$ as desired.
\end{proof}
The construction of $D$ sheafifies; let $\Delta$  and $\widetilde{\Delta}$ be the corresponding subvarieties of $\cX$ and $\widetilde{\cX}$.
Then $\widetilde{\cX} \setminus \widetilde{\Delta} \to \cX \setminus \Delta$ is an isomorphism, and $\widetilde{\Delta} \to \Delta$ is a finite map.
By induction on dimension, there is an $N$ such that, for any $p>N$ and any choice of compatible splittings on $\Delta \times \FF_p$ and $\widetilde{\Delta} \times \FF_p$, all compatibly split subvarieties  of $\Delta \times \FF_p$ lift to $\widetilde{\Delta}  \times \FF_p$. 
From now on, we will choose $p$ larger than this $N$ (as well as obeying all of the other conditions on $p$).

Finally, we will restrict ourselves to $p > \deg \alpha$.

Take a $p$ large enough to obey all of our conditions, compatible splittings $\phi$ on $\cX(p)$ and $\cY(p)$, and a split subvariety $V$ of $\cX(p)$.
If the generic point of $V$ is not in $\Delta$, then $\mu^{-1}(V)$ is split because $\mu$ is an isomorphism away from $\Delta$.
If the generic point is in $\Delta$, then $\mu^{-1}(V)$ is split because $p$ we chose $p$ large enough that all compatibly split subvarieties of $\Delta$ lift to $\widetilde{\Delta}$.
So, either way, $\mu^{-1}(V)$ is split.

By Proposition~\ref{MainCase}, $\widetilde{\alpha}^{-1}(\mu^{-1}(V))$ is split. 
The image of a split variety is split, so $\nu (\widetilde{\alpha}^{-1}(\mu^{-1}(V))) = \alpha^{-1}(V)$ is split. 
This is the desired result. \qedsymbol

\thebibliography{9}

\bibitem{BK} M. Brion and S. Kumar, \emph{Frobenius splitting methods in geometry and representation theory},
Progress in Mathematics, \textbf{231}, Birkh\"{a}user Boston, Inc., Boston, MA, 2005.

\bibitem{EGA} A. Grothendieck, ``El\'{e}ments de g\'{e}om\'{e}trie alg\'{e}brique. IV. \'{E}tude locale des sch\'{e}mas et des morphismes de sch\'{e}mas. III",
\emph{Inst. Hautes \'{E}tudes Sci. Publ. Math.} No. 28 1966.

\bibitem{KS} M. Katzman and K. Schwede, ``An algorithm for computing compatibly Frobenius split subvarieties'',
\emph{J. Symbolic Comput.} \textbf{47} (2012), no. 8, 996--1008.

\bibitem{Knutson} A. Knutson, ``Frobenius splitting, point-counting, and degeneration", \texttt{arXiv:0911.4941}.

\bibitem{KLS} A. Knutson, T. Lam and D. E. Speyer, ``Projections of Richardson Varieties",
\emph{Journal f\"{u}r die reine und angewandte Mathematik} \textbf{687} (2014) pp.133 -- 157 .

\bibitem{KM}  S. Kumar and V. Mehta, ``Finiteness of the number of compatibly split subvarieties", \emph{Int. Math. Res. Not.} (2009), no. 19, 3595--3597.

\bibitem{Rajchgot} J. Rajchgot, ``Compatibly split subvarieties of the Hilbert scheme of points in the plane'', \texttt{arXiv:1210.6305}.

\bibitem{S} K. Schwede, ``$F$-adjunction", \emph{Algebra \& Number Theory}, (2009) Vol. 3, No. 8, 907-950.

\bibitem{ST} K. Schwede and K. Tucker, ``On the number of compatibly Frobenius split subvarieties, prime $F$-ideals, and log canonical centers'',
\emph{Ann. Inst. Fourier (Grenoble)} \textbf{60} (2010), no. 5, 1515 -- 1531.

\end{document}